\newcommand{\C}{{\mathbb{C}}}
\newcommand{\Z}{{\mathbb{Z}}}
\newcommand{\Q}{{\mathbb{Q}}}
\newcommand{\N}{{\mathbb{N}}}
\newcommand{\Nm}{{\mathrm{Nm}}}
\newcommand{\OO}{{\EuScript{O}}}
\author{Hester Graves}
\email{gravesh@umich.edu}
\author{Nick Ramsey}
\email{naramsey@gmail.com}
\title{Euclidean Ideals in Quadratic Imaginary Fields}
\begin{document}
\frontmatter

\begin{abstract}
  We classify all quadratic imaginary number fields that have a
  Euclidean ideal class.  There are seven of them, they are of class
  number at most two, and in each case the unique class that generates
  the class-group is moreover norm-Euclidean.
\end{abstract}

\maketitle

\section{Introduction}

In \cite{lenstra}, Lenstra generalized the notion of a Euclidean
domain to that of a Dedekind domain $R$ with a Euclidean ideal
$C\subseteq R$.  He proved that if $C$ is a Euclidean ideal then the
class-group of $R$ is cyclic and generated by $C$.  Moreover, if
$C=R$, then his notion reduces to that of a Euclidean domain and the
above result reduces to the familiar fact that a Euclidean domain is a
principal ideal domain.  Building on work of Weinberger
(\cite{weinberger}), Lenstra (\cite{lenstra}) showed (conditional of
the generalized Riemann hypothesis) that any generator of the class
group of the ring of integers in a number field with infinite unit
group is Euclidean.   As the only
number fields with finite unit group aside from $\Q$, it is natural to
inquire about the situation for quadratic imaginary fields.

It is known that, among the nine quadratic imaginary fields of class
number one, exactly five have Euclidean integer rings and in each case
the norm serves as a Euclidean algorithm (see \cite{samuel}).  The purpose of
this paper is to extend this result to the setting of Euclidean ideal
classes by determining all quadratic imaginary fields that have a
Euclidean ideal.  We record them in the following theorem.
\begin{theo}\label{thm1}
  The quadratic imaginary fields with a Euclidean ideal are as follows.
\begin{center}
  \begin{tabular}{c|c} {\rm class number} & {\rm fields}\\\hline
     1 & $\Q(\sqrt{-D})$ for $D\in\{1, 2, 3, 7, 11\}$ \\
     2 & $\Q(\sqrt{-D})$ for $D\in\{5,15\}$
  \end{tabular}
\end{center}
In each case the unique class that generates the class-group is
moreover norm-Euclidean.
\end{theo}
If one is only interested in \emph{norm}-Euclidean ideals, then this
result is contained in Proposition 2.4 of \cite{lenstra}.  Of course,
as the results of Weinberger and Lenstra mentioned above conditionally
demonstrate and examples of Clark (\cite{clark}) and Harper
(\cite{harper}) unconditionally demonstrate (in the class number one
case), there are ideals in integer rings that are are Euclidean but
not with respect to the norm.

In the Euclidean ring setting, a construction of Motzkin
(\cite{motzkin}) has proven to be a fruitful tool in the study of
Euclidean rings that are not norm-Euclidean.  In her thesis, the first
author adapts this construction to the Euclidean ideal setting.  Her
techniques are the main tool used to prove Theorem \ref{thm1}.

\mbox{}\\

\noindent{\bf Convention} -  In this paper all Euclidean algorithms
are taken to be $\N$-valued and $\N$ is taken to include $0$.

\section{A Motzkin-type construction for Euclidean ideals}

Let $R$ be a Dedekind domain with fraction field $K$.  We denote by
$E$ the set of fractional ideals of $R$ in $K$ that contain $R$ itself.
Recall from \cite{lenstra} that $C$ is called \emph{Euclidean} if
there exists a function $\psi:E\longrightarrow \N$ such that for all
$I\in E$ and all $x\in IC\setminus C$, there exists $y\in C$ such
that $$\psi((x+y)^{-1}IC)<\psi(I)$$ In this case $\psi$ is called a
\emph{Euclidean algorithm} for $C$.  If $C$ is Euclidean then it
generates the class-group of $R$.  Also, if $\psi$ is a Euclidean
algorithm for $C$ then it is also a Euclidean algorithm for any ideal
in the same class as $C$ and no ideal in a different class than $C$.
These facts are all elementary and can be found in \cite{lenstra}.

The following definition, given in \cite{gravesthesis}, is an
adaptation of Motzkin's construction to the Euclidean ideal setting.
\begin{defi}\label{hesterconstruction}
  Let $C$ be a non-zero ideal in $R$.  We define a nested sequence of
  subsets of $E$ as follows.  Set $A_{C,0} = \{R\}$ and for $i>0$ we
  set $$A_{C,i} = A_{C,i-1}\cup \left\{ I\in E \left| \begin{array}{c}
    \forall x\in IC\setminus C\ \ \exists y\in C
    \\ \mbox{such\ that}\ (x-y)^{-1}IC \in
    A_{C,i-1}\end{array}\right.\right\}$$ Finally, set $A_C = \cup_i
  A_{C,i}$.
\end{defi}
When the ideal $C$ is fixed or otherwise clear from the context, we
will often omit it from the notation and simply use $A_i$ and
$A=\cup_i A_i$.  The significance of this construction is the
following lemma of the first author (see \cite{gravesthesis}).
\begin{lemm}\label{lem1}
  The ideal $C$ is Euclidean if and only if $A=E$.
\end{lemm}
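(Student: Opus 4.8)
The plan is to prove both implications by exploiting the single order-theoretic invariant attached to the construction, namely the function $\phi\colon E\to\N\cup\{\infty\}$ defined by $\phi(I)=\min\{i : I\in A_{C,i}\}$, with $\phi(I)=\infty$ if $I\notin A_C$; this is well defined because the sets $A_{C,i}$ are nested. Two elementary remarks will be used throughout. First, since $C$ is closed under negation, the quantifier ``$\exists\,y\in C$'' ranges over the same set whether we write $(x-y)^{-1}IC$ or $(x+y)^{-1}IC$, so the recursion in Definition~\ref{hesterconstruction} and the Euclidean condition become literally interchangeable after the substitution $y\mapsto -y$. Second, for $I\in E$, $x\in IC\setminus C$ and $y\in C$ one has $x+y\in IC$ and $x+y\neq 0$ (otherwise $x=-y\in C$), whence $R\subseteq(x+y)^{-1}IC$, so $(x+y)^{-1}IC\in E$ and $\phi$ may be applied to it. Finally, $IC=C$ forces $I=R$, so $IC\setminus C$ is nonempty precisely when $I\neq R$, while for $I=R$ every condition quantified over $IC\setminus C$ holds vacuously.

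For the implication $A=E\Rightarrow C$ Euclidean, I would show that $\phi$ is itself a Euclidean algorithm. If $A_C=E$ then $\phi$ is finite on all of $E$, hence $\N$-valued. Given $I\in E$ and $x\in IC\setminus C$, set $n=\phi(I)$; the existence of $x$ forces $I\neq R$, so $n\geq 1$ (as $A_{C,0}=\{R\}$) and $I\in A_{C,n}\setminus A_{C,n-1}$. By the defining property of the braced set in $A_{C,n}$, there is $y\in C$ with $(x-y)^{-1}IC\in A_{C,n-1}$, so $\phi\bigl((x-y)^{-1}IC\bigr)\leq n-1<\phi(I)$; rewriting via $y\mapsto -y$ produces the required $y\in C$ with $\phi\bigl((x+y)^{-1}IC\bigr)<\phi(I)$.

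For the converse, suppose $\psi$ is a Euclidean algorithm for $C$. I would prove by induction on $n\geq 0$ the statement that $\{I\in E : \psi(I)\leq n\}\subseteq A_{C,n+1}$, which immediately yields $E\subseteq A_C$ and hence $A_C=E$. The base case $\{I:\psi(I)\leq 0\}\subseteq A_{C,1}$ reduces to the observation that $\psi(I)=0$ forces $I=R\in A_{C,0}$: if instead $I\neq R$, any $x\in IC\setminus C$ would demand a target of strictly smaller, hence negative, $\psi$-value. For the inductive step, take $I$ with $\psi(I)=n$; if $I=R$ we are done, and otherwise, for each $x\in IC\setminus C$ the Euclidean property supplies $y\in C$ with $\psi\bigl((x+y)^{-1}IC\bigr)\leq n-1$, so the inductive hypothesis places $(x+y)^{-1}IC$ in $A_{C,n}$. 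Applying the sign substitution and the definition of $A_{C,n+1}$ then gives $I\in A_{C,n+1}$.

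The step I expect to be the crux is choosing this inductive statement correctly in the converse. The naive approach---fixing $I$, descending via the Euclidean property for each $x$ separately, and inducting on $\psi$-values---produces for each $x$ a target lying in \emph{some} level $A_{C,m_x}$, with no a priori uniform bound on $m_x$; yet membership $I\in A_{C,n+1}$ demands that all of these targets sit in one common level $A_{C,n}$. Phrasing the induction as $\{I:\psi(I)\leq n\}\subseteq A_{C,n+1}$ resolves exactly this difficulty, since the inductive hypothesis places every target of $\psi$-value at most $n-1$ into the single level $A_{C,n}$ simultaneously. The remaining verifications---that the relevant fractional ideals lie in $E$ and that no denominator vanishes---are the elementary remarks recorded at the outset.
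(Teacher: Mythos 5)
Your proof is correct. Note that the paper itself gives no proof of this lemma---it is quoted from the first author's thesis \cite{gravesthesis}---so there is no in-paper argument to compare against line by line; but your argument is exactly the natural (Motzkin-style) one, and your forward direction recovers precisely the remark the paper makes immediately after the lemma: the minimal level function $\psi(I)=i$ for $I\in A_i\setminus A_{i-1}$ (your $\phi$) is itself a Euclidean algorithm for $C$, and indeed the minimal one. Your preliminary remarks dispose of the genuine (if small) technical points correctly: the sign discrepancy between the definition's $(x-y)^{-1}IC$ and the Euclidean condition's $(x+y)^{-1}IC$ is immaterial since $C=-C$; the target ideal $(x+y)^{-1}IC$ lies in $E$ because $x+y$ is a nonzero element of $IC$; and $IC\setminus C=\emptyset$ exactly when $I=R$, by cancellation of fractional ideals in a Dedekind domain. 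One mild comment on your stated ``crux'': the uniformity issue in the converse is less delicate than you suggest, since the sets $A_{C,i}$ are nested, so any strong induction on $\psi$-values with a nondecreasing level bound would work; your formulation $\{I:\psi(I)\leq n\}\subseteq A_{C,n+1}$ is a clean way to package this, but the nestedness is what makes any such packaging succeed.
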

In fact, one can say more.  Namely, if $A=E$, then the function $\psi:
E\longrightarrow \N$ defined by $\psi(I) = i$ if $I\in A_i\setminus
A_{i-1}$ is a Euclidean algorithm for $C$ and is minimal with respect
to this property.

The following two lemmas furnish constraints on the sets $A_{C,i}$ that
will be useful in what follows.  The first is general in nature and
highlights the role of cyclicity of the class-group.  
\begin{lemm}\label{lem2}
  If $I\in A_i\setminus A_{i-1}$ then $[I] = [C^{-i}]$.
\end{lemm}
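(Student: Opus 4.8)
The plan is to argue by induction on $i$, reading $A_{-1} = \emptyset$ by convention. For the base case $i=0$, the only element of $A_0\setminus A_{-1} = \{R\}$ is $R$ itself, whose class is trivial and equals $[C^{-0}]$, so there is nothing to prove. For the inductive step I would assume the statement for all indices smaller than $i$ and take $I\in A_i\setminus A_{i-1}$, with the goal of showing $[I] = [C^{-i}]$.

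The key preliminary observation I would record is that for any nonzero $z\in K$ the fractional ideal $z^{-1}IC$ satisfies $[z^{-1}IC] = [I][C]$, since the principal ideal $(z)=zR$ is trivial in the class-group. Thus \emph{any} ideal of the form $(x-y)^{-1}IC$ has class exactly $[I][C]$, regardless of which set $A_j$ it happens to lie in.

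The heart of the argument is to exhibit a single ideal of this shape lying in $A_{i-1}\setminus A_{i-2}$, so that the inductive hypothesis forces its class to be $[C^{-(i-1)}]$. Here I would use both membership conditions. Since $I\notin A_{i-1}$, the ideal $I$ fails the defining condition of $A_{i-1}$: there exists $x_0\in IC\setminus C$ such that $(x_0-y)^{-1}IC\notin A_{i-2}$ for \emph{every} $y\in C$. (The difference $IC\setminus C$ is automatically nonempty here, since otherwise the universal condition would hold vacuously and place $I$ in $A_{i-1}$.) On the other hand, since $I\in A_i\setminus A_{i-1}$, the ideal $I$ does satisfy the defining condition of $A_i$, so for this particular $x_0$ there is a $y_0\in C$ with $(x_0-y_0)^{-1}IC\in A_{i-1}$; note $x_0\neq y_0$ as $x_0\notin C\ni y_0$, so the inverse is defined. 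Combining the two statements, $(x_0-y_0)^{-1}IC\in A_{i-1}\setminus A_{i-2}$. By the inductive hypothesis its class equals $[C^{-(i-1)}]$, while by the observation above its class equals $[I][C]$; equating these gives $[I][C] = [C^{-(i-1)}]$, hence $[I] = [C^{-i}]$.

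I expect the main obstacle to be precisely this last maneuver. A naive appeal to $I\in A_i$ alone only guarantees that some $(x-y)^{-1}IC$ lies in $A_{i-1}$, and since the map $j\mapsto [C^{-j}]$ need not be injective when $C$ has finite order, this would only locate $[I]$ somewhere in $\{[C^{-1}],\dots,[C^{-i}]\}$. The decisive ingredient is the non-membership $I\notin A_{i-1}$, which forces the constructed ideal to sit at level \emph{exactly} $i-1$ rather than lower, thereby upgrading the inclusion into the sharp equality $[I]=[C^{-i}]$.
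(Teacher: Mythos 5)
Your proof is correct and is precisely the induction that the paper's one-line proof (``an immediate inductive consequence of Definition \ref{hesterconstruction}'') gestures at, so the two take essentially the same route. Your key maneuver --- using the witness $x_0$ of the \emph{failure} of $I\in A_{i-1}$ together with the membership $I\in A_i$ to force $(x_0-y_0)^{-1}IC$ into $A_{i-1}\setminus A_{i-2}$ exactly, rather than merely into $A_{i-1}$ --- is exactly the detail needed to upgrade the naive conclusion $[I]\in\{[C^{-1}],\dots,[C^{-i}]\}$ to the stated equality $[I]=[C^{-i}]$, i.e., it is the content hiding behind the paper's word ``immediate.''
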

\begin{proof}
  This is an immediate inductive consequence of Definition
  \ref{hesterconstruction}.
\end{proof}

By definition, any $I\in A_i\setminus A_{i-1}$ has the property that
for all $x\in IC\setminus C$ there exists $y\in C$ such that
$(x+y)^{-1}IC\in A_{i-1}$.  However, using the previous lemma and
ideal class considerations, one can often cut down the set
``$A_{i-1}$'' in this statement.  When $R^\times$ is finite, this
observation is particularly useful because one can use it to
efficiently bound the norm of a new element of $A_i$, as the following
lemma demonstrates.
\begin{lemm}\label{lem3}
  Suppose that $R^\times$ is finite, and suppose that $S\subseteq
  A_{i-1}$ is a subset with the property that, if $I\in A_i\setminus
  A_{i-1}$ then for all $x\in IC\setminus C$ there
  exists $y\in C$ such that $$(x-y)^{-1}IC\in S$$ Then all $I\in
  A_i\setminus A_{i-1}$ have the property that $$\Nm(I^{-1})\leq
  |R^\times||S|+1.$$
\end{lemm}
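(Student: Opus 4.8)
The plan is to reinterpret the integer $\Nm(I^{-1})$ as the cardinality of a finite quotient and then to exhibit a map from that quotient into $S$ whose fibers have size at most $|R^\times|$. First I would record the identity $\Nm(I^{-1}) = [IC : C]$. Since $I\in E$ we have $R\subseteq I$ and hence $C\subseteq IC$, so the index on the right is finite; multiplicativity of the ideal norm then gives $[IC:C] = \Nm(C)/\Nm(IC) = 1/\Nm(I) = \Nm(I^{-1})$. It therefore suffices to bound the number of nonzero cosets in the finite group $IC/C$, namely $\Nm(I^{-1})-1$, by $|R^\times||S|$.

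Next I would build a map $f$ from the nonzero cosets of $IC/C$ into $S$. Given such a coset, choose a representative $x\in IC\setminus C$; the hypothesis furnishes $y\in C$ with $(x-y)^{-1}IC\in S$. Setting $z=x-y$, we have $z\in IC\setminus C$ and $z$ lies in the same coset as $x$, so every nonzero coset has a representative $z$ with $z^{-1}IC\in S$. Assigning to this coset the value $z^{-1}IC$ then defines a map into $S$; since $IC/C$ is finite, there are no delicate choice issues.

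The crux is to bound each fiber of $f$ by $|R^\times|$. Fix $J\in S$ and consider the set of $z\in IC$ with $z^{-1}IC=J$, equivalently $zJ=IC$. If this set is nonempty, say $z_0J=IC$, then any further solution $z$ satisfies $(z/z_0)J=J$, and the standard fact that the $K^\times$-stabilizer of a nonzero fractional ideal over a Dedekind domain is exactly $R^\times$ forces $z\in R^\times z_0$. Hence at most $|R^\times|$ elements $z$ satisfy $z^{-1}IC=J$. As distinct cosets in $f^{-1}(J)$ are represented by distinct such $z$, we obtain $|f^{-1}(J)|\leq|R^\times|$. Summing over $J\in S$ bounds the number of nonzero cosets by $|R^\times||S|$, and restoring the zero coset yields $\Nm(I^{-1})\leq|R^\times||S|+1$.

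I expect the main obstacle to be this fiber bound: isolating the clean assertion that the only scalars in $K^\times$ preserving a fractional ideal are the units of $R$, and verifying that distinct cosets in a fiber carry distinct representatives, so that the $|R^\times|$-element orbit genuinely caps the fiber size. Once this is in place, the identification of the norm with the index $[IC:C]$ and the construction of $f$ are routine.
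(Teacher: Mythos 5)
Your proof is correct and follows essentially the same route as the paper: both construct a map from the nonzero classes of $IC/C$ into $S$ via the hypothesis, and both bound the multiplicity of that map by $|R^\times|$ using the fact that any two elements $z_1, z_2$ with $z_1^{-1}IC = z_2^{-1}IC$ differ by a unit (i.e., the $K^\times$-stabilizer of a fractional ideal is $R^\times$). The only cosmetic difference is that you normalize representatives to $z = x-y$ and count fibers directly, whereas the paper phrases the same count as an injection of the unit-orbits of nonzero classes into $S$.
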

\begin{proof}
  For $x\in IC\setminus C$, the condition that there exists $y\in C$
  such that $(x-y)^{-1}IC$ is a particular ideal depends only on the
  class of $x$ in $IC/C$.  Fix an ideal $I\in A_i\setminus A_{i-1}$.
  For each non-zero class in $IC/C$ choose a
  representative $x\in IC\setminus C$ and a $y\in C$
  such that $(x-y)^{-1}IC\in S\subseteq A_{i-1}$.  This
  collection of choices amounts to a (decidedly non-canonical)
  function $$(IC\setminus C)/C\longrightarrow S$$ Suppose that two classes in
  $(IC\setminus C)/C$ map to the same ideal and let
  $x_1$ and $x_2$ be their chosen representatives.  Then there exist
  $y_1,y_2\in C$ such that $$(x_1-y_1)^{-1}IC =
  (x_2-y_2)^{-1}IC.$$ It follows that there exists a
  unit $u\in \OO_K^\times$ such that $x_1-y_1=u(x_2-y_2)$, and hence
  $x_1-ux_2 = y_1-uy_2\in C$.  That is, the classes of $x_1$ and $x_2$
  in $IC/C$ differ (multiplicatively) by a unit.  The
  upshot is that the set of nonzero classes in $IC/C$
  modulo the multiplicative action of $R^\times$ injects into
  $S$, and hence $$|IC/C| -1 \leq
  |R^\times||S|$$ But since $R$ is Dedekind, the left
  side is simply $\Nm(I^{-1})-1$, so this is the desired inequality.
\end{proof}

\section{Application to quadratic imaginary fields}

For the remainder of the paper, $K$ will denote a quadratic imaginary
field and $\OO_K$ its ring of integers.  One approach to classifying
the Euclidean $\OO_K$ is to break into cases according to the
factorizations of small rational primes in $\OO_K$ and use Lemmas
\ref{lem2} and \ref{lem3} of the previous section to glean
consequences about the sets $A_i$.  If one uses the crutch of known
lists of quadratic imaginary fields of small class number, then this
approach \emph{nearly} yields Theorem \ref{thm1}.  Indeed, aside from
the known norm-Euclidean cases detailed in this theorem, one finds in
nearly all cases that the sequence of sets $A_i$ stabilizes very
quickly (one needn't ever consider ideals with prime factors of norm
larger than 7).  The one vexing exception is the field
$K=\Q(\sqrt{-23})$.  A bit of computation with SAGE (\cite{sage})
reveals that the $A_i$ in this case contain at least the inverses of
every ideal of norm up to $47$.  Lacking the patience to continue this
computation to its end (and indeed the confidence that it had one), we
decided to switch perspective.

It is convenient to first dispense with the cases where $\OO_K^\times$
is unusually large, namely $K=\Q(\sqrt{-1})$ and $K=\Q(\sqrt{-3})$.
These two fields are well-known to have norm-Euclidean rings of
integers, and for any other $K$ we have $\OO_K^\times = \{\pm 1\}$.
From this point on we assume that $K$ is among the latter fields.  It
then follows from Lemma \ref{lem3} that any $I\in A_1\setminus A_0$
has $\Nm(I^{-1})\leq 3$.  As a result, by Lemma \ref{lem2}, a
Euclidean ideal class in $K$ is represented by a
residue degree one prime lying over $2$ or $3$.

Fix an embedding of $K$ into $\C$.  We will freely identify $K$ with
its image in $\C$ in what follows.  Under this embedding, the field
norm corresponds to the square of the complex absolute value.  Note
that a nonzero fractional ideal $C$ of $K$ is identified with a
lattice in $\C$.  Consider the union of the open disks of radius
$\sqrt{\Nm(C)}$ centered about these lattice points.  It is a simple
consequence of the definition and the above comments that $C$ is
norm-Euclidean if these disks cover all of $\C$ (see also
\cite{lenstra}).  The moral of the following result is that, if this
covering fails too badly, then $C$ cannot possibly be Euclidean for
\emph{any} choice of algorithm.

\begin{prop}\label{prop1}
  Let $K$ and $C$ be as above, and let $U$ denote the union of the
  open disks of radius $\sqrt{\Nm(C)}$ centered at the elements of $C$.  If
  the complement of $U$ in $\C$ contains a nonempty open set, then $C$
  is not a Euclidean ideal.
\end{prop}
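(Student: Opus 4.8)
The plan is to argue by contraposition through Lemma~\ref{lem1}: if $C$ were Euclidean then $A=E$, so it suffices to exhibit a nonempty collection $\mathcal{B}\subseteq E$ of fractional ideals that is disjoint from $A$. The device I would use to certify disjointness is a self-reproducing, or \emph{recurrent}, property: call $\mathcal{B}\subseteq E$ recurrent if $R\notin\mathcal{B}$ and for every $I\in\mathcal{B}$ there is a distinguished element $x_I\in IC\setminus C$ such that $(x_I-y)^{-1}IC\in\mathcal{B}$ for \emph{every} $y\in C$. A short induction on $i$ then shows $\mathcal{B}\cap A_i=\emptyset$ for all $i$: the base case is $R\notin\mathcal{B}$, so $\mathcal{B}\cap A_0=\emptyset$; and if some $I\in\mathcal{B}$ lay in $A_i$ for $i\geq 1$, then by the inductive hypothesis it would lie in $A_i\setminus A_{i-1}$, whence Definition~\ref{hesterconstruction} applied to the element $x_I\in IC\setminus C$ would produce a $y\in C$ with $(x_I-y)^{-1}IC\in A_{i-1}$, an ideal that is also in $\mathcal{B}$ by recurrence, contradicting $\mathcal{B}\cap A_{i-1}=\emptyset$. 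Thus a nonempty recurrent $\mathcal{B}$ is automatically disjoint from $A=\cup_i A_i$, and Lemma~\ref{lem1} yields the conclusion.

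To build $\mathcal{B}$ I would feed in the geometry. Write $F=\C\setminus U$ for the set of points not covered by the disks; since each $c\in C$ is the centre of one of the disks, $C\subseteq U$, so $F$ is disjoint from $C$. Under the embedding $K\hookrightarrow\C$, a point $x\in K$ lies in $F$ exactly when $\Nm(x-c)\geq\Nm(C)$ for all $c\in C$, i.e. when $x$ admits no norm-decreasing reduction against $C$. By hypothesis $F$ contains a nonempty open set, hence an open ball, and since $K$ is dense in $\C$ this ball contains a point $\alpha\in K$; then $\alpha\in F$, so $\alpha\notin C$, and $I=R+\alpha C^{-1}\in E$ satisfies $\alpha\in IC\setminus C$ with $\alpha\in F$. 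The natural candidate is therefore $\mathcal{B}=\{I\in E:(IC\setminus C)\cap F\neq\emptyset\}$, which we have just seen is nonempty and which excludes $R$ since $RC\setminus C=\emptyset$.

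The entire difficulty is concentrated in verifying recurrence for a suitable choice of the distinguished points $x_I$, and this is where the hypothesis that $F$ has nonempty \emph{interior} — rather than merely being nonempty — is indispensable. A single far point only obstructs the norm algorithm: if $x\in IC$ is far from $C$ then $\Nm\big(((x-y)^{-1}IC)^{-1}\big)=\Nm(x-y)\Nm(I^{-1})/\Nm(C)\geq\Nm(I^{-1})$ for every $y\in C$, so the norm cannot decrease, yet some other algorithm might. To defeat \emph{every} algorithm one must show that the obstruction regenerates: for the chosen $x_I$ and \emph{each} $y\in C$, the reduced ideal $J=(x_I-y)^{-1}IC$ must again meet $F$ in $JC\setminus C$. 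The subtlety is that the ambient lattice changes from $IC$ to $JC=(x_I-y)^{-1}IC^2$, so one is asking a fixed open region $F$ to be hit by a whole family of rescaled and rotated lattices. The structural fact that makes this tractable is a self-similarity: writing $L_k$ for the lattice at the $k$-th reduction, one has $L_{k+1}=(x_k-y_k)^{-1}L_kC$, and iterating $h=|\Cl(K)|$ times gives $L_{k+h}=\lambda L_k$ with $\lambda\in K^\times$, because the product of the scalars $(x_j-y_j)^{-1}$ is a scalar and the accumulated factor $C^{h}$ is principal. Thus after $h$ steps the lattice returns to a rotation--dilation of itself.

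I would accordingly organise the recurrence in blocks of $h$ steps, using the open ball of far points together with this self-similarity to relocate a far point at the new scale and orientation after each block. Making this relocation uniform over all the intermediate choices of $y$ is, I expect, the main obstacle; the two-dimensional room afforded by an \emph{open} set of far points, as opposed to an isolated one, is precisely what should supply the slack needed to carry it out. Once a nonempty recurrent $\mathcal{B}$ is produced in this way, the induction of the first paragraph gives $\mathcal{B}\cap A=\emptyset$, hence $A\neq E$, and Lemma~\ref{lem1} shows that $C$ is not a Euclidean ideal.
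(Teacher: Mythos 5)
Your overall framework is sound: the induction showing that a nonempty ``recurrent'' set $\mathcal{B}\subseteq E$ must be disjoint from every $A_i$, hence from $A$, is correct, and it is essentially the same contradiction the paper runs (the paper phrases it as an infinite strictly decreasing chain of $\psi$-values, impossible by well-ordering of $\N$). But the proof has a genuine gap exactly where you flag it: recurrence is never established. Your final two paragraphs replace a proof with a plan --- blocks of $h=|\Cl(K)|$ steps, self-similarity of the lattices $L_{k+h}=\lambda L_k$, ``relocation'' of far points --- and you concede that making the relocation uniform over all intermediate choices of $y$ is an open obstacle. Since that step is the entire content of the proposition, the argument is incomplete. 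There is also a secondary defect: the candidate $\mathcal{B}=\{I\in E:(IC\setminus C)\cap F\neq\emptyset\}$ is not the right set to induct on, because an ideal whose ``denominator'' $\Nm(I^{-1})$ is small can meet $F$ accidentally, and nothing forces its reductions to meet $F$ again.

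The missing idea, which makes the self-similarity scheme unnecessary, is a uniform density statement (Lemma~\ref{lemdense} of the paper): for each $\varepsilon>0$ there is an $M$ such that for every $z\in K$ and every fractional ideal $J$ with $\Nm(J)>M$, there exists $x\in J^{-1}$ with $\Nm(x-z)<\varepsilon$. This is proved by taking class representatives $J_1,\dots,J_h$, a covering radius $M_i$ for each lattice $J_i^{-1}$, and $M>\max_i\left(M_i\Nm(J_i)/\varepsilon\right)$; writing $J=gJ_i$, the rescaling by $g^{-1}$ shrinks distances by $\Nm(g^{-1})=\Nm(J_i)/\Nm(J)$. Granting this, fix $z\in K$ whose $\sqrt{\varepsilon}$-neighborhood lies in $F$ (density of $K$ in $\C$ plus the openness hypothesis --- this is the only place openness is used), and take $\mathcal{B}=\{I\in E:\Nm(I^{-1}C^{-1})>M\}$, which omits $R$ and is nonempty. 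Recurrence is then immediate: the lemma applied to $J=I^{-1}C^{-1}$ produces $x_I\in IC$ with $|x_I-z|<\sqrt{\varepsilon}$, so $x_I\in F$ and in particular $x_I\notin C$; and your own computation $\Nm\left(((x_I-y)^{-1}IC)^{-1}\right)=\Nm(x_I-y)\Nm(I^{-1})/\Nm(C)\geq\Nm(I^{-1})$ --- which you set aside as merely obstructing the norm algorithm --- shows that every reduction of $I$ at $x_I$ stays in $\mathcal{B}$. So the norm-monotonicity you already derived is half of the argument; the other half is the density lemma, which converts ``large denominator'' back into ``meets $F$'' uniformly over all lattices at once, supplying precisely the uniformity your block-of-$h$ scheme could not deliver.
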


Before proceeding with the proof, we need the following lemma,
which effectively states that inverses of fractional ideals of
increasingly large norm are increasingly dense in $K$.
\begin{lemm}\label{lemdense}
  Let $K$ be a quadratic imaginary field and let  $\varepsilon>0$ be any
  positive real number.  There exists a number $M$ such that, for all
  $z\in K$ and all fractional ideals $I$ with $\Nm(I)>M$, there exists
  an element $x\in I^{-1}$ such that $\Nm(x-z)<\epsilon$.
\end{lemm}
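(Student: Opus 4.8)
The plan is to recast the statement in terms of lattice geometry and to show that the \emph{covering radius} of the lattice $I^{-1}\subseteq\C$ tends to $0$ as $\Nm(I)\to\infty$, uniformly in $I$. Since $K\subseteq\C$ under our fixed embedding, controlling the covering radius controls the approximation of every admissible $z$ at once. Recall that a nonzero fractional ideal $J$ is a lattice in $\C$ of covolume $\Nm(J)\cdot\sqrt{|d_K|}/2$, that $\Nm$ is the square of the absolute value, and that $\Nm(I^{-1})=\Nm(I)^{-1}$. Thus the covolume of $I^{-1}$ equals $\frac{\sqrt{|d_K|}}{2}\,\Nm(I)^{-1}$ and tends to $0$ as $\Nm(I)\to\infty$.

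First I would produce a short nonzero vector of $I^{-1}$. By Minkowski's convex body theorem applied to a disk centered at the origin, the lattice $I^{-1}$ contains a nonzero element $\gamma$ with $\Nm(\gamma)=|\gamma|^2\leq \frac{4}{\pi}\,\mathrm{covol}(I^{-1}) = \frac{2\sqrt{|d_K|}}{\pi}\,\Nm(I)^{-1}$. The crucial point is that $I^{-1}$ is not merely a lattice but an $\OO_K$-module, so $\gamma\OO_K\subseteq I^{-1}$. Since multiplication by $\gamma$ is a similarity of $\C$ that scales distances by $|\gamma|$, the lattice $\gamma\OO_K$ is a homothetic copy of $\OO_K$, and its covering radius equals $|\gamma|\,r_K$, where $r_K$ is the finite, $K$-dependent covering radius of the lattice $\OO_K$.

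Now I would assemble the estimate. Because $\gamma\OO_K\subseteq I^{-1}$, the covering radius of $I^{-1}$ is at most that of $\gamma\OO_K$, namely $|\gamma|\,r_K$. Hence for every $z\in\C$ there is $x\in\gamma\OO_K\subseteq I^{-1}$ with $|x-z|\leq |\gamma|\,r_K$, and therefore $\Nm(x-z)=|x-z|^2\leq r_K^2\,|\gamma|^2\leq \frac{2\sqrt{|d_K|}}{\pi}\,r_K^2\,\Nm(I)^{-1}$. Choosing $M = \frac{2\sqrt{|d_K|}}{\pi}\,r_K^2/\varepsilon$ makes the right-hand side less than $\varepsilon$ whenever $\Nm(I)>M$, which proves the lemma (in fact for all $z\in\C$, not just $z\in K$).

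The step I expect to require the essential idea is the observation that a small covolume does \emph{not} by itself force a small covering radius: a highly skewed lattice of tiny covolume can still leave arbitrarily large gaps. What rescues the argument is the $\OO_K$-module structure of $I^{-1}$, which supplies the unskewed homothetic sublattice $\gamma\OO_K$; this is exactly where the arithmetic of $K$, rather than mere volume, enters. The remaining ingredients — the covolume formula, the fact that a superlattice has no larger covering radius, and the scaling of the covering radius under multiplication by $\gamma$ — are routine and can be dispatched quickly.
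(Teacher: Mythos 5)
Your proof is correct, but it follows a genuinely different route from the paper's. The paper exploits the finiteness of the class group: it fixes representatives $I_1,\dots,I_h$ of the ideal classes, writes an arbitrary $I$ as $gI_i$ with $g\in K^\times$, and observes that multiplication by $g^{-1}$ is a similarity of $\C$ scaling distances by $|g|^{-1}$, where $\Nm(g^{-1})=\Nm(I_i)/\Nm(I)$ is small when $\Nm(I)$ is large; a point of $I_i^{-1}$ within a fixed distance of $gz$ then transports to a point of $I^{-1}$ very close to $z$. You instead invoke Minkowski's convex body theorem to extract a short nonzero vector $\gamma\in I^{-1}$ and use the $\OO_K$-module structure to place the unskewed sublattice $\gamma\OO_K$ inside $I^{-1}$, bounding the covering radius by $|\gamma|\,r_K$. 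Interestingly, both arguments address the same potential obstruction you correctly identify --- that small covolume alone does not prevent a skewed lattice with large gaps --- but resolve it with different arithmetic inputs: the paper says $I^{-1}$ \emph{is} a similarity image of one of finitely many fixed lattices, while you say $I^{-1}$ \emph{contains} a small similarity image of the single fixed lattice $\OO_K$. Your approach buys independence from class group finiteness, an explicit value $M=\frac{2\sqrt{|d_K|}}{\pi}r_K^2/\varepsilon$, and the stronger conclusion for all $z\in\C$; the paper's approach buys freedom from any geometry-of-numbers machinery, needing only that finitely many fixed lattices each have finite covering radius. Both are perfectly adequate for the application in Proposition \ref{prop1}.
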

\begin{proof}
  Let $I_1, I_2, \dots, I_h$ be a set of representatives of the ideal
  class group of $K$.  Viewing each fractional ideal $I_i^{-1}$ as a
  lattice in $\C$, we see that disks of sufficiently large radius
  centered at the elements of $C$ will cover $\C$.  Thus, for each $i$
  there exists a positive number $M_i$ such that, for each $z'\in K$
  there exists $x'\in I_i^{-1}$ such that $$\Nm(x'-z')=|x'-z'|^2<M_i$$
  Now choose $M$ so that $M>\max_i (M_i\Nm(I_i)/\varepsilon)$.  Let
  $z\in K$ and let $I$ be a fractional ideal with $\Nm(I)>M$.  Choose
  $i$ so that $I=gI_i$ for some $g\in K^\times$ and pick $x'\in
  I_i^{-1}$ such that $\Nm(x'-gz)< M_i$.  Then
\begin{eqnarray*}
  \Nm(g^{-1}x'-z) &= & \Nm(g^{-1})\Nm(x'-gz) \\ &<&
  \frac{\Nm(I_i)}{\Nm(I)}M_i <\varepsilon
\end{eqnarray*}
so that $x=g^{-1}x'\in (I_iI^{-1})I_i^{-1} = I^{-1}$ is the desired
element.
\end{proof}

\begin{proof}(of Proposition \ref{prop1})
  Suppose that the complement of $U$ in $\C$ contains a nonempty open
  set. Arguing by contradiction, let us suppose that $C$ is Euclidean
  for the algorithm $\psi:E\longrightarrow \N$, so by Lemma
  \ref{lem1}, $A=\cup A_i = E$.  Since $K$ is dense in $\C$ under its
  embedding, the complement of $U$ contains an
  $\sqrt{\varepsilon}$-neighborhood of an element $z\in K$ for some
  $\varepsilon>0$.  Let $M$ be as in Lemma \ref{lemdense} for this $K$
  and $\varepsilon$.

  Suppose that $I_0\in E$ and $\Nm(I_0^{-1}C^{-1})>M$.  By Lemma
  \ref{lemdense}, there exists $x\in I_0C$ such that $$|x-z| =
  (\Nm(x-z))^{1/2}<\sqrt{\varepsilon}$$ It follows that $x$ lies in
  the complement of $U$.  Since $x\in I_0C\setminus C$ and $I_0\in
  E=A$, there exists $y\in C$ such that
  $\psi((x+y)^{-1}I_0C)<\psi(I_0)$.  Define $I_1 = (x+y)^{-1}I_0C$ and
  note that $I_1\in E$ and $\psi(I_1)<\psi(I_0)$.  By the above,
  we also have $$\Nm(I_1) = \Nm(I_0)\Nm(C)/\Nm(x+y) =
  \Nm(I_0)\Nm(C)/|x+y|^2 \leq \Nm(I_0)$$ since $x$ lies in the
  complement of $U$.

  Because of this norm inequality, we again have $\Nm(I_1^{-1}/C)>M$
  and can repeat the argument with $I_0$ replaced by $I_1$ to obtain a
  fractional ideal $I_2\in E$ with $\psi(I_2)<\psi(I_1)$ and
  $\Nm(I_2)\leq \Nm(I_1)$.  Proceeding in this fashion, we obtain a
  sequence of ideals $I_0, I_1, \dots$ in $E$ with
  $$\Nm(I_0)\geq \Nm(I_1)\geq \Nm(I_2)\geq \cdots$$ and
  $$\psi(I_0)>\psi(I_1)>\psi(I_2)>\cdots$$ But the latter
  is clearly impossible, as $\N$ is well-ordered.  We conclude that
  $C$ could not have been Euclidean to begin with.
\end{proof}

With the running restrictions on $K$, we know that any Euclidean ideal
class is represented by a prime of norm $2$ or $3$.  We are led by the
above to examine the union $U$ as above for $C$ a degree one prime
dividing $2$ or $3$.  In determining the extent to which $U$ covers
$\C$, it is clear that one need only consider a particular fundamental
domain for $C$ in $\C$.  Let $K = \Q(\sqrt{-D})$ for a square-free
positive integer $D$.  In each of the following cases, we identify
which $D$ correspond to the case, and for such $D$ we draw a
fundamental domain and the covering circles comprising $U$ that meet
this fundamental domain.  The pictures below are were generated with
SAGE (\cite{sage}).

As we will see, as $D$ increases, the fundamental domains we choose
below get too tall to be covered entirely by these disks.  In each
case, we illustrate the fundamental domain and the disks comprising
$U$ that meet it.  We do this for the following $D$ in each class:
those for which $U$ covers all of $\C$ and the first $D$ for which it
does not (keeping in mind that we are only interested in square-free
$D$).  For the latter $D$, as we will see, it always happens that the
complement of $U$ moreover contains an open set (as opposed to having
a nonempty but discrete complement), so Proposition \ref{prop1}
implies that $C$ is not Euclidean.

We note that in each case below, the given \emph{ideal} generators of $C$ are
also generators of $C$ as an Abelian group, as is easy to check.  Thus
the parallelogram that they span forms the boundary of a
fundamental domain, which is the one that we consider in each case.

\begin{description}
\item[\bf Case 1: $C$ a degree one prime over $2$]\

Since there is a degree one prime dividing $2$, $2$ either ramifies or
splits in $K$, corresponding to the conditions $D\equiv 1,2\pmod{4}$
and $D\equiv 7\pmod{8}$, respectively.  We consider the various
sub-cases separately.

\begin{description}

\item[\bf $2$ ramifies, $D\equiv 1\pmod{4}$]\

Here $\OO_K$ is generated as an algebra over $\Z$ by $\sqrt{-D}$, so
a defining polynomial of $\OO_K$ is $x^2+D$.  Modulo $2$, this is
congruent to $(x+1)^2$, so the unique prime above $(2)$ is
$(2,\sqrt{-D}+1)$.  Thus we use $2$ and $\sqrt{-D}+1$ to span a
parallelogram bounding a fundamental domain, and obtain the following
pictures for increasing $D$.

\begin{center}
  \includegraphics[scale=.26]{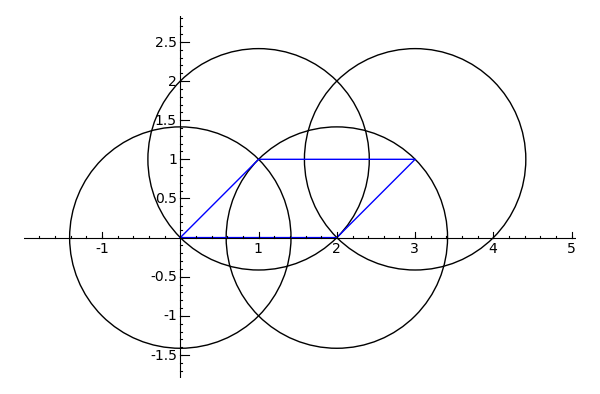}
  \includegraphics[scale=.26]{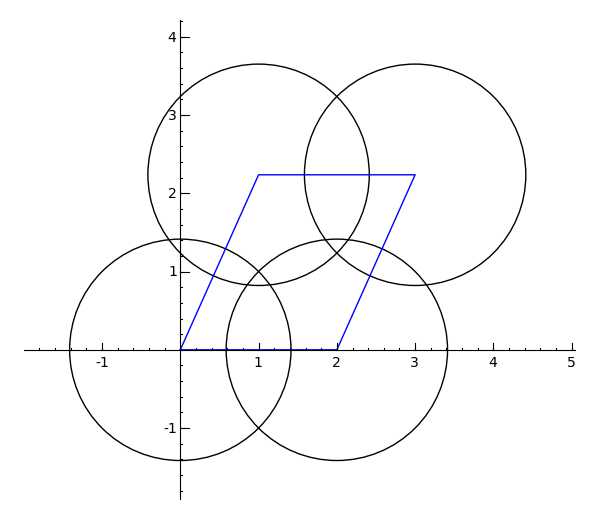}
  \includegraphics[scale=.26]{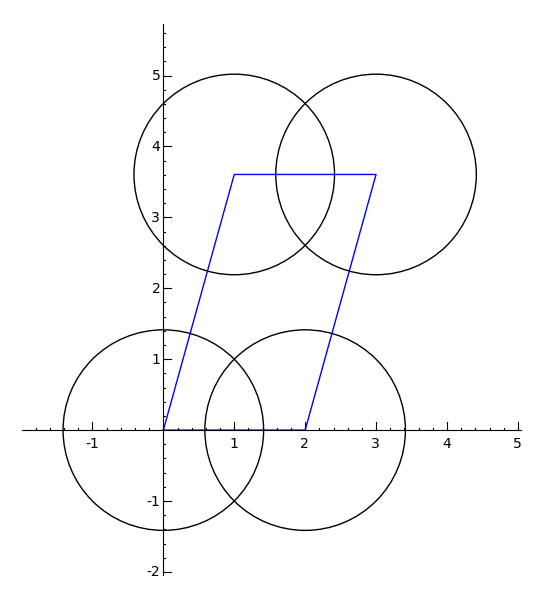}
\end{center}
\begin{center}
  $D=1$ \hspace{1in} $D=5$ \hspace{1in} $D=13$
\end{center}

We conclude that the only $D$ under consideration (recall that $D=1$
was treated separately because of additional units) in this class for
which a degree one prime over $2$ is Euclidean is $D=5$.

\item[\bf $2$ ramifies, $D\equiv 2\pmod{4}$]\

Again the defining polynomial of $\OO_K$ is $x^2+D$.  Modulo $2$, this
is simply $x^2$, so the prime above $(2)$ is $(2,\sqrt{-D})$, and
working as above we obtain the pictures.

\begin{center}
  \includegraphics[scale=.26]{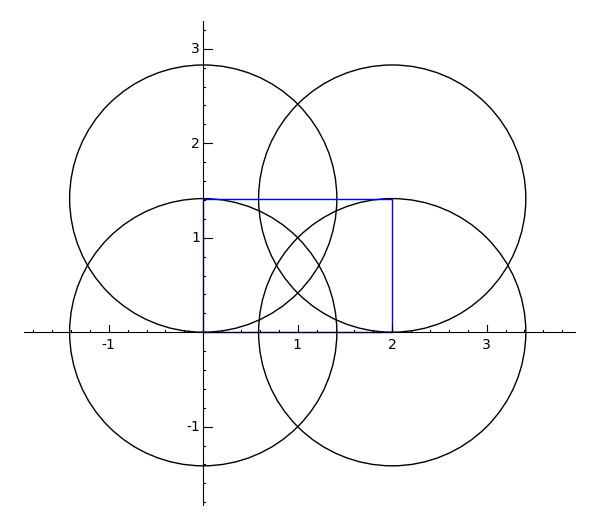}
  \includegraphics[scale=.26]{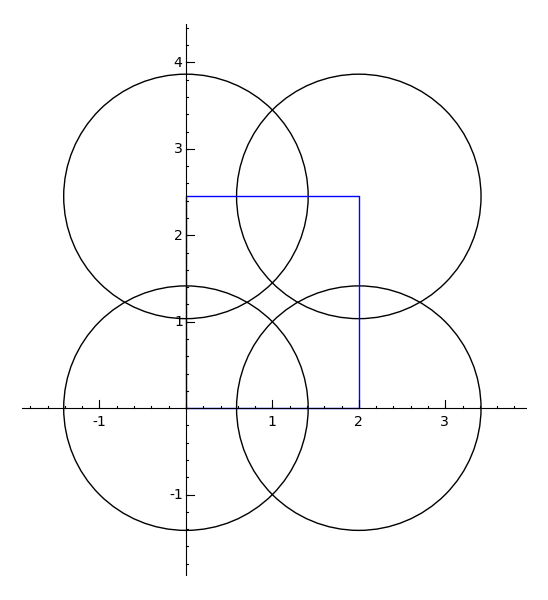}
\end{center}
\begin{center}
  $D=2$ \hspace{1in} $D=6$
\end{center}

We conclude that the only $D$ in this class for which a degree one
prime over $2$ is Euclidean is $D=2$

\item[\bf $2$ splits, $D\equiv 7\pmod{8}$]\

Here $\OO_K$ is generated as an algebra over $\Z$ by
$\frac{1+\sqrt{-D}}{2}$.  The defining polynomial is then
$x^2-x+\frac{1+D}{4}$, which is congruent modulo $2$ to $x(x-1)$.  The
primes above $(2)$ are $(2,\frac{1+\sqrt{-D}}{2})$ and
$(2,\frac{-1+\sqrt{-D}}{2})$.  As these are Galois-conjugate, we need
only examine the first, which gives the following pictures.

\begin{center}
  \includegraphics[scale=.26]{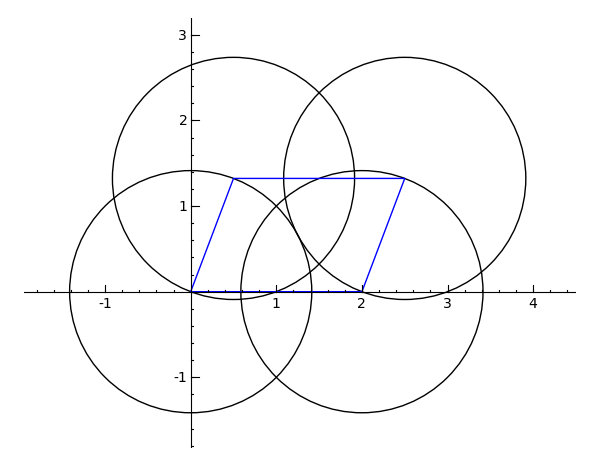}
  \includegraphics[scale=.26]{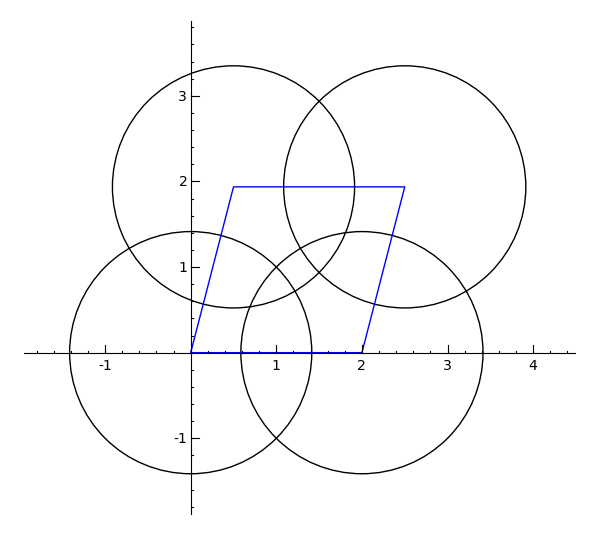}
  \includegraphics[scale=.26]{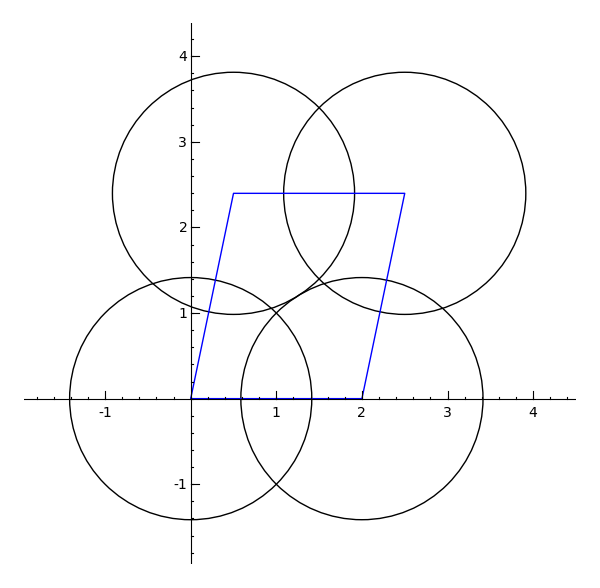}
\end{center}
\begin{center}
  $D=7$ \hspace{1in} $D=15$ \hspace{1in} $D=23$
\end{center}

We conclude that the only $D$ in this class for which a degree one
prime over $2$ is Euclidean are $D=7$ and $D=15$.  It is worth
mentioning that the complement $U$ for $D=23$ is very small.  This
explains the atypical behavior of the sets $A_i$ for $\Q(\sqrt{-23})$
in that they do not stabilize quickly.

\end{description}

\mbox{}\\

\item[\bf Case 2: $C$ is a degree one prime over $3$]\

Next, we examine the case of a degree one primes dividing $3$.  Again,
this means that either $3$ ramifies or splits in $K$, but in order
associate a congruence condition on $D$, we must also take into
account the residue of $D$ mod $4$ since this effects the nature of
the ring of integers $\OO_K$.

\begin{description}

\item[\bf $3$ ramifies, $D\equiv 1,2\pmod{4}$]\

These conditions are equivalent to $D\equiv 6, 9\pmod{12}$.  Here
$\OO_K$ is generated over $\Z$ by $\sqrt{-D}$, so a defining polynomial
is $x^2+D$.  Modulo $3$ this is $x^2$, so the prime above $(3)$ is
$(3,\sqrt{-D})$.  Already for $D=6$, we see that the complement of $U$
contains a nonempty open set.

\begin{center}
  \includegraphics[scale=.26]{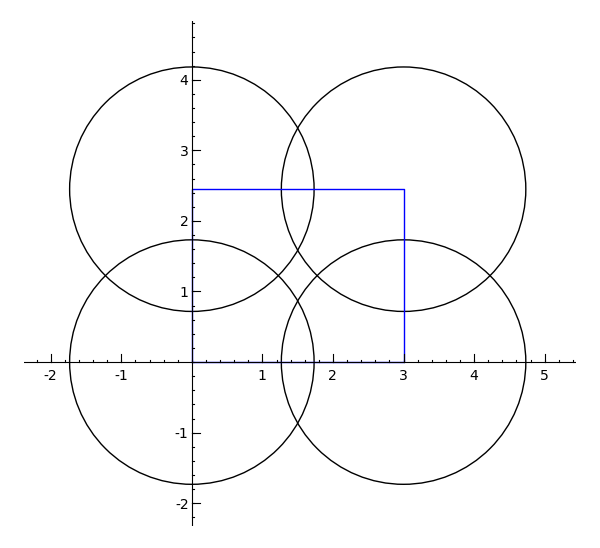}
\end{center}
\begin{center}
  $D=6$ 
\end{center}

We conclude that there are no $D$ in this class for which a degree one
prime over $3$ is Euclidean.

\item[\bf $3$ split, $D\equiv 1,2\pmod{4}$]\

These conditions amount to $D\equiv 2, 5\pmod{12}$.  Again, $x^2+D$ is
a defining polynomial, which is congruent modulo $3$ to $(x-1)(x+1)$.
Thus the primes above $(3)$ are $(3,\sqrt{-D}+1)$ and
$(3,\sqrt{-D}-1)$.  As these are Galois-conjugate, we need only
consider the first, which gives the following pictures.

\begin{center}
  \includegraphics[scale=.26]{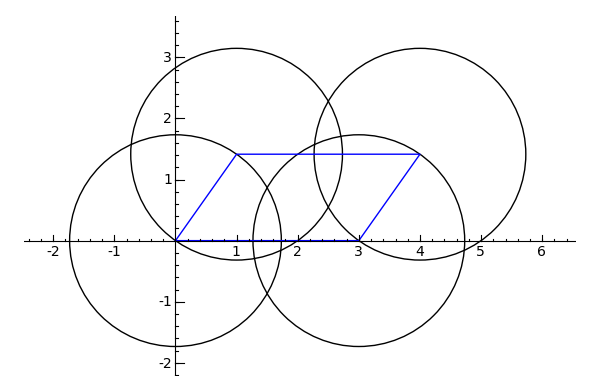}
  \includegraphics[scale=.26]{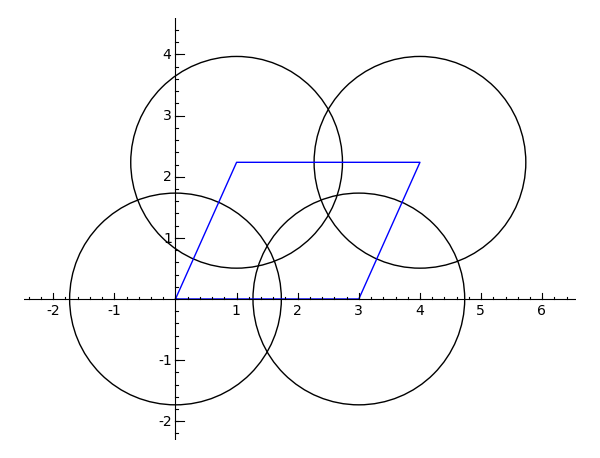}
  \includegraphics[scale=.26]{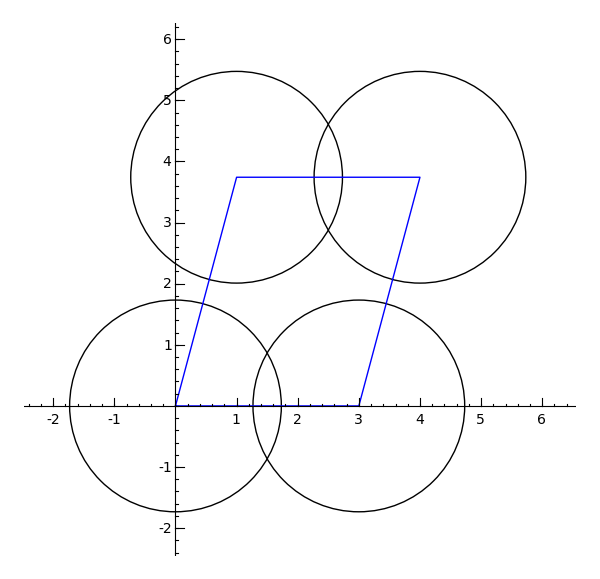}
\end{center}
\begin{center}
  $D=2$ \hspace{1in} $D=5$ \hspace{1in} $D=14$
\end{center}

We conclude that the only $D$ in this class for which a degree one
prime over $3$ is Euclidean are $D=2$ and $D=5$.

\item[\bf $3$ ramifies, $D\equiv 3\pmod{4}$]\

This amounts to $D\equiv 3\pmod{12}$, and in this case
$\frac{1+\sqrt{-D}}{2}$ generates $\OO_K$, and $x^2-x+\frac{1+D}{4}$ is
a defining polynomial.  Modulo $3$, this is congruent to $(x+1)^2$, so
the prime above $(3)$ is $(3,\frac{3+\sqrt{-D}}{2})$.

\begin{center}
  \includegraphics[scale=.26]{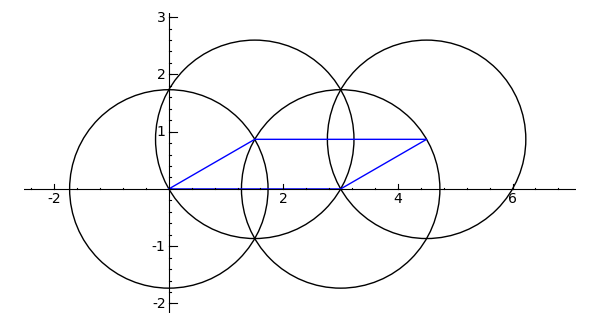}
  \includegraphics[scale=.26]{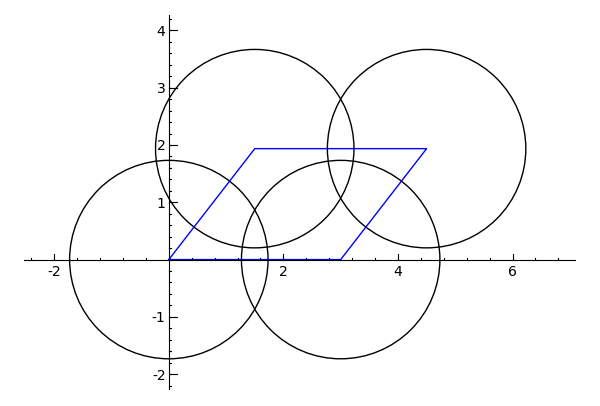}
  \includegraphics[scale=.26]{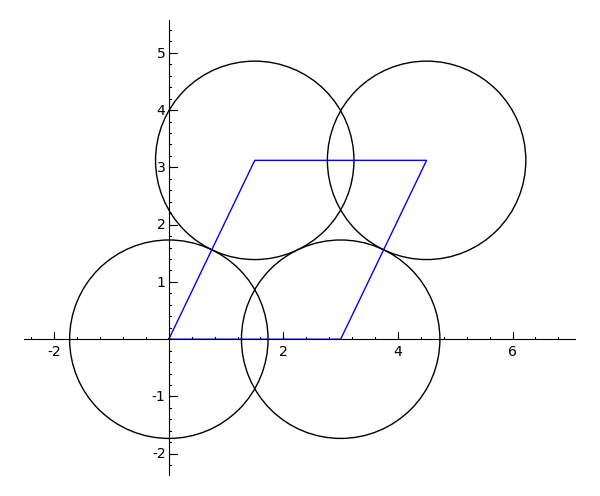}
\end{center}
\begin{center}
  $D=3$ \hspace{1in} $D=15$ \hspace{1in} $D=39$
\end{center}

We conclude that the only $D$ under consideration (recall that $D=3$
was treated separately because of extra units) in this class for which
a degree one prime over $3$ is Euclidean is $D=15$.

\item[\bf $3$ splits, $D\equiv 3\pmod{4}$]\

This amounts to $D\equiv 11 \pmod{12}$.  Again, $\OO_K$ is generated by
$\frac{1+\sqrt{-D}}{2}$ and $x^2-x+\frac{1+D}{4}$ is a defining
polynomial.  Modulo $3$, this is $x(x-1)$, so the primes above $(3)$
are $(3,\frac{1+\sqrt{-D}}{2})$ and $(3,\frac{-1+\sqrt{-D}}{2})$.  As
these are Galois-conjugate, we need only consider the first, which
gives the following pictures.

\begin{center}
  \includegraphics[scale=.26]{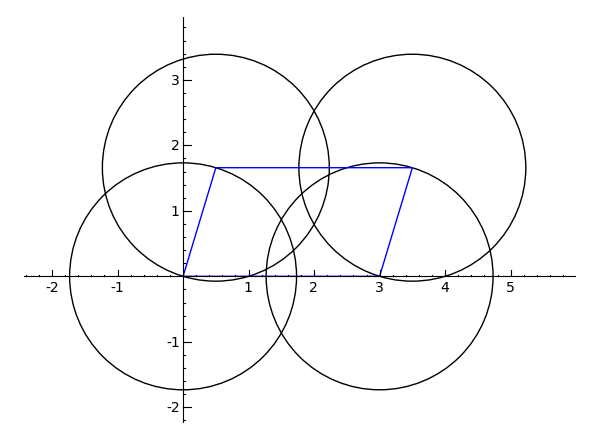}
  \includegraphics[scale=.26]{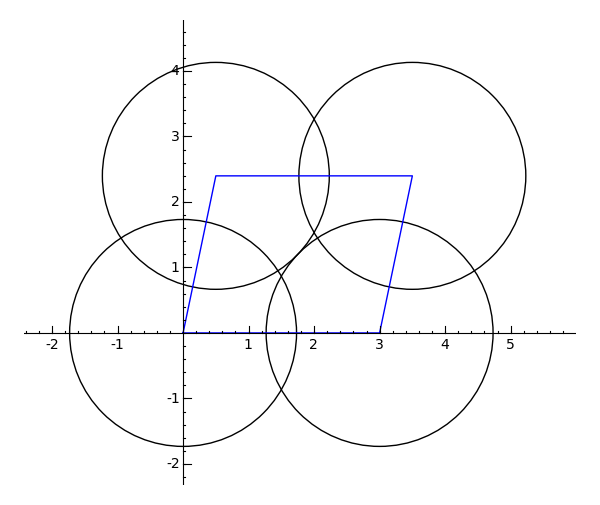}
\end{center}
\begin{center}
  $D=11$ \hspace{1in} $D=23$
\end{center}

We conclude that the only $D$ in this class for which a degree one
prime over $3$ is Euclidean is $D=11$.

\mbox{}\\

\end{description}

\end{description}

The upshot of this enumeration is that the only $D$ for which
$\Q(\sqrt{-D})$ has a Euclidean ideal class are $D\in \{1, 2, 3, 5, 7,
11, 15\}$, and each the unique generator of the class-group is in fact
norm-Euclidean, establishing Theorem \ref{thm1}.

\providecommand{\bysame}{\leavevmode ---\ }
\providecommand{\og}{``}
\providecommand{\fg}{''}
\providecommand{\smfandname}{\&}
\providecommand{\smfedsname}{\'eds.}
\providecommand{\smfedname}{\'ed.}
\providecommand{\smfmastersthesisname}{M\'emoire}
\providecommand{\smfphdthesisname}{Th\`ese}

\bibliographystyle{smfplain}

\end{document}